\newtheorem{theorem}{Theorem}
\newtheorem{lemma}{Lemma}
\author{Evan Randles}
\title{A note on the completeness of Fourier-based metrics on measures}
\date{}
\begin{document}
\maketitle
\begin{abstract}Resolving an open question of J. A. Carillo and G. Toscani, M. Stawiska recently proved that certain metric spaces of probability measures equipped with Fourier-based metrics are complete. In this note, we extend such Fourier-based metrics to the class of complex Borel measures and consider the question of completeness in this broader setting.
\end{abstract}

\noindent{\small\bf Keywords:} Convergence of measures, Fourier-based metrics, moments. \\

\noindent{\small\bf Mathematics Subject Classification:} Primary 28A33, 60B10; Secondary, 42B10, 46E27
\\

\noindent We denote by $\mathcal{M}=\mathcal{M}(\mathbb{R}^d)$ the set of complex-valued finite Borel measures on $\mathbb{R}^d$. For each $\mu\in \mathcal{M}$, its Fourier transform $\widehat{\mu}:\mathbb{R}^d\to\mathbb{C}$ is defined by
\begin{equation*}
\widehat{\mu}(\xi)=\int_{\mathbb{R}^d}e^{-ix\cdot\xi}\,d\mu(x)
\end{equation*}
for $\xi\in\mathbb{R}^d$; here, $\cdot$ denotes the dot product. It is a standard fact that measures are characterized by their Fourier transforms, i.e., for $\mu,\nu\in\mathcal{M}$, $\mu=\nu$ if and only if $\widehat{\mu}=\widehat{\nu}$ \cite[Theorem 1.11]{StrombergBook}. Let us fix an integer $m\geq 2$ and, for a pair of measures $\mu,\nu\in \mathcal{M}$, set
\begin{equation*}
d_m(\mu,\nu)=\sup_{\xi\neq 0}\frac{\abs{\widehat{\mu}(\xi)-\widehat{\nu}(\xi)}}{\abs{\xi}^m}
\end{equation*}
where $\abs{\cdot}$ denotes the Euclidean norm on $\mathbb{R}^d$. It is straightforward to see that $d_m$ is symmetric, non-negative, and satisfies the triangle inequality. By virtue of the uniqueness property of the Fourier transform mentioned above, we see that $d_m(\mu,\nu)=0$ if and only if $\mu=\nu$. Thus $d_m$ is a metric on any subset of $\mathcal{M}$ for which it is finite. 

The metric $d_m$, when restricted to the space of probability measures $\mathcal{P}(\mathbb{R}^d)\subseteq\mathcal{M}$, was introduced in the study of asymptotics of solutions to the Boltzmann equation \cite{GTW95}. A general theory for such metrics and their applications to statistical physics can be found in \cite{CT07}. Addressing an open question of J. A. Carillo and G. Toscani (posed in \cite{CT07}), M. Stawiska recently sorted out the question of completeness for spaces of probability measures equipped with the metric $d_m$. This result appears as Theorem 1.1 of \cite{Sta20} and is described precisely below. In the spirit of the works \cite{BD93, DSC14, RSC15, RSC17}, we are interested in various questions and phenomena with well-known analogues in probability but posed in the broader complex-valued setting. One goal, in particular, is to understand the extent to which certain results depend on positivity. With this as partial motivation, the purpose of this note is to generalize the metric spaces of \cite{GTW95, CT07,Sta20} (equipped with $d_m$) to the complex setting and study the analogous question of completeness.

To set the stage, we first construct subsets of $\mathcal{M}$ on which $d_m$ becomes a metric.  As pointed out in \cite{CT07} and \cite{Sta20}, key to this construction is the fact that $d_m$ behaves nicely on subsets of measures with (finite and) identical moments. Denote by $\mathcal{M}_m$ the subset of $\mathcal{M}$ which have finite moments of order $m$, i.e., $\mu\in \mathcal{M}_m$ if and only if 
\begin{equation*}
\int_{\mathbb{R}^d}\abs{x}^m\,d\mu(x)<\infty. 
\end{equation*}
Of course, because $1\in L^1(\mu)$, each $\mu\in\mathcal{M}_m$ has finite moments of order up to and including $m$. For a specific choice $M=\{M_\beta\}_{\abs{\beta}\leq m}\subseteq\mathbb{C}$ of moments, let
\begin{equation*}
\mathcal{M}_m^M=\left\{\mu\in \mathcal{M}_m \left\vert \, \,\int_{\mathbb{R}^d} x^\beta\,d\mu(x)=M_\beta\,\,\mbox{for}\,\,\abs{\beta}\leq m\right\}\right.;
\end{equation*}
here, for a multi-index $\beta=(\beta_1,\beta_2,\dots,\beta_d)\in\mathbb{N}^d$, $\abs{\beta}=\beta_1+\beta_2+\cdots+\beta_d$ and, for $x=(x_1,x_2,\dots,x_d)\in\mathbb{R}^d$, $x^\beta=x_1^{\beta_1}\cdot x_2^{\beta_2}\cdots x_d^{\beta_d}$. The set $\mathcal{M}_m^M$ is precisely the set of measures $\mu\in \mathcal{M}_m$ with identical moments prescribed by $M=\{M_\beta\}_{\abs{\beta}\leq m}$. When one considers the analogous (sub) class of probability measures $\mathcal{P}_m^M\subseteq\mathcal{M}_m^M$ (where, necessarily, $\{M_\beta\}\subseteq\mathbb{R}$), Theorem 1.1 of \cite{Sta20} asserts that $(\mathcal{P}_m^M(\mathbb{R}^d),d_m)$ is complete when $m$ is even and, for certain choices of $M$, not complete when $m$ is odd. Our main result is that, in the general context of complex Borel measures, every integer $m\geq 2$ and choice of moments $M$ give rise to a non-complete metric space. Precisely, this is the following theorem.



\begin{theorem}\label{thm:Metric_Not_Compete}
Given any integer $m\geq 2$ and $M=\{M_\beta\}_{\abs{\beta}\leq m}\subseteq\mathbb{C}$, $(\mathcal{M}_m^M, d_m)$ is a metric space which is not complete.
\end{theorem}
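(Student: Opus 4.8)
The plan is to verify the two assertions separately. That $d_m$ is a metric is almost immediate: by the discussion preceding the theorem it is already symmetric, nonnegative, satisfies the triangle inequality, and vanishes only on the diagonal, so it remains only to check that it is finite on $\mathcal{M}_m^M$ and that $\mathcal{M}_m^M\neq\emptyset$. For finiteness, fix $\mu,\nu\in\mathcal{M}_m^M$ and $\xi\neq 0$. Since $(x\cdot\xi)^k=\sum_{\abs{\beta}=k}\binom{k}{\beta}\xi^\beta x^\beta$ and $\mu,\nu$ share the moments $M_\beta$ for all $\abs{\beta}\le m$, one may subtract the degree-$m$ Taylor polynomial of the exponential under the integral:
\[
\widehat{\mu}(\xi)-\widehat{\nu}(\xi)=\int_{\mathbb{R}^d}\Big(e^{-ix\cdot\xi}-\sum_{k=0}^{m}\frac{(-ix\cdot\xi)^k}{k!}\Big)\,d(\mu-\nu)(x).
\]
The elementary remainder estimate $\abs{e^{it}-\sum_{k\le m}(it)^k/k!}\le 2\abs{t}^m/m!$ together with $\abs{x\cdot\xi}\le\abs{x}\,\abs{\xi}$ then gives $\abs{\widehat{\mu}(\xi)-\widehat{\nu}(\xi)}\le (2/m!)\,\abs{\xi}^m\int\abs{x}^m\,d\abs{\mu-\nu}(x)$, hence $d_m(\mu,\nu)\le(2/m!)\bigl(\int\abs{x}^m\,d\abs{\mu}+\int\abs{x}^m\,d\abs{\nu}\bigr)<\infty$. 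For nonemptiness, the real-linear map sending a real polynomial $q$ of degree $\le m$ to $\bigl(\int x^\beta q(x)e^{-\abs{x}^2}\,dx\bigr)_{\abs{\beta}\le m}$ is injective---if its image is $0$ then $\int q^2e^{-\abs{x}^2}=0$, forcing $q\equiv 0$---hence a bijection, and hence so is its complexification; solving for $q$ against the data $M$ and taking $d\mu_0=q(x)e^{-\abs{x}^2}\,dx$ produces an element of $\mathcal{M}_m^M$.

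For non-completeness I would produce an explicit $d_m$-Cauchy sequence by a lacunary construction anchored at $\mu_0$. Fix $\theta\in C_c^\infty(\mathbb{R}^d)$ with $\supp\theta\subseteq\{1\le\abs{\xi}\le 2\}$, $\theta\ge 0$, $\theta\not\equiv 0$, and let $\psi$ be the inverse Fourier transform of $\theta$; then $\psi$ is a Schwartz function all of whose moments vanish, since $\widehat{\psi}=\theta$ vanishes in a neighborhood of the origin. Put $\psi_j(x)=\psi(4^j x)$ for $j\ge 1$, so $\widehat{\psi_j}(\xi)=4^{-dj}\theta(4^{-j}\xi)$ is supported in the annulus $A_j=\{4^j\le\abs{\xi}\le 2\cdot 4^j\}$; since $2\cdot 4^j<4^{j+1}$, the $A_j$ are pairwise disjoint. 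Choose scalars $\lambda_j$ with $\abs{\lambda_j}=4^{(d+m)j}/j^2$, so that $\abs{\lambda_j}4^{-dj}\to\infty$ while $\abs{\lambda_j}4^{-(d+m)j}\to 0$ (possible since $m\ge 2$), and set $\mu_n=\mu_0+\sum_{j=1}^n\lambda_j\,\psi_j(x)\,dx$. Each $\mu_n$ lies in $\mathcal{M}_m^M$: the term added to $\mu_0$ is a finite complex measure with Schwartz density, all of whose moments vanish and whose $m$-th absolute moment is finite.

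Because $\widehat{\mu_n}-\widehat{\mu_\ell}=\sum_{\ell<j\le n}\lambda_j\widehat{\psi_j}$ is a sum of functions with pairwise disjoint supports, $d_m(\mu_n,\mu_\ell)\le\|\theta\|_\infty\,\sup_{j>\ell}\abs{\lambda_j}4^{-(d+m)j}\to 0$ as $\ell\to\infty$, so $(\mu_n)$ is $d_m$-Cauchy. Moreover, for each fixed $\xi\neq 0$ at most one of the numbers $\widehat{\psi_j}(\xi)$ is nonzero, so $\widehat{\mu_n}(\xi)$ is eventually constant in $n$; hence $\widehat{\mu_n}\to\phi$ pointwise for a well-defined $\phi\colon\mathbb{R}^d\to\mathbb{C}$.

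It remains to check that $\phi$ is not $\widehat{\mu_\infty}$ for any $\mu_\infty\in\mathcal{M}_m^M$. Choose $\xi^\ast$ with $\theta(\xi^\ast)>0$, so $1\le\abs{\xi^\ast}\le 2$, and put $\xi_j=4^j\xi^\ast\in A_j$; then $\phi(\xi_j)=\widehat{\mu_0}(\xi_j)+\lambda_j 4^{-dj}\theta(\xi^\ast)$, so $\abs{\phi(\xi_j)}\ge\abs{\lambda_j}4^{-dj}\theta(\xi^\ast)-\abs{\mu_0}(\mathbb{R}^d)\to\infty$. Thus $\phi$ is unbounded, hence not the Fourier transform of any finite complex measure. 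On the other hand, if $d_m(\mu_n,\mu_\infty)\to 0$ for some $\mu_\infty\in\mathcal{M}_m^M$, then $\abs{\widehat{\mu_n}(\xi)-\widehat{\mu_\infty}(\xi)}\le\abs{\xi}^m\,d_m(\mu_n,\mu_\infty)\to 0$ for every $\xi\neq 0$, which forces $\widehat{\mu_\infty}$ to agree with $\phi$ off the origin---impossible. Hence $(\mathcal{M}_m^M,d_m)$ is not complete. I expect the only genuinely delicate point to be the calibration of the weights $\lambda_j$: they must grow fast enough that the pointwise limit $\phi$ is unbounded (so that it is not realizable by a finite measure), yet slowly enough against the $\abs{\xi}^{-m}$ weight that the $d_m$-tails still vanish. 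Choosing lacunary scales $4^j$ is exactly what decouples these constraints, since it makes the Fourier transforms $\widehat{\psi_j}$ have pairwise disjoint supports and thereby removes all cross terms; a smaller but necessary preliminary is the nonemptiness of $\mathcal{M}_m^M$, without which it would be vacuously complete.
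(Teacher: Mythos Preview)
Your proof is correct and follows a genuinely different route from the paper's. For finiteness of $d_m$ the paper applies Taylor's theorem (Peano remainder) to $\widehat{\nu_k}$ near $0$ and handles $\abs{\xi}>1$ by total variation, whereas you insert the Taylor polynomial of $e^{-ix\cdot\xi}$ under the integral and invoke the uniform remainder bound $2\abs{t}^m/m!$ to obtain a single global estimate. For nonemptiness the paper obtains it as a byproduct of its Cauchy-sequence construction, while you build a base measure $\mu_0$ by a Gaussian--polynomial moment interpolation. The main structural difference is the Cauchy sequence itself: the paper takes measures with Fourier transforms $\phi_\delta(\xi)=\bigl(P(\xi)+\abs{\xi}^{m+2}\sin(1/\abs{\xi}^m)e^{-\delta/\abs{\xi}^2}\bigr)e^{-\abs{\xi}^{2m}}$ and sends $\delta\downarrow 0$, so the pointwise limit $\phi_0$ fails to be $C^m$ at the origin; you instead add lacunary bumps $\lambda_j\widehat{\psi_j}$ supported on disjoint annuli tending to infinity, calibrated so that the pointwise limit is unbounded. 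Thus the paper's obstruction is a smoothness failure near $0$, while yours is growth at infinity. Your argument is arguably more elementary in that it only uses boundedness of Fourier transforms of finite measures rather than the moment--smoothness correspondence, and the disjoint supports make both the Cauchy estimate and the divergence transparent; the paper's single explicit family, by contrast, does double duty in supplying nonemptiness and the Cauchy sequence at once. (Minor note: your parenthetical ``possible since $m\ge 2$'' is not actually needed---with $\abs{\lambda_j}=4^{(d+m)j}/j^2$ both required limits hold for every $m\ge 1$.)
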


\noindent In order the prove the theorem, we first present four lemmas. The first shows that the Fourier transform exchanges the finiteness of moments for smoothness; its proof is standard and omitted.
\begin{lemma}\label{lem:MomentsAndSmoothness}
If $\mu\in \mathcal{M}_m$, then $\widehat{\mu}$ is $m$-times continuously differentiable on $\mathbb{R}^d$, i.e., $\widehat{\mu}\in C^m(\mathbb{R}^d)$ and, for each multi-index $\beta\in\mathbb{N}^d$ for which $\abs{\beta}\leq m$, 
\begin{equation*}
(D^\beta\widehat{\mu})(\xi)=(-i)^{\abs{\beta}}\int_{\mathbb{R}^d}x^\beta e^{-ix\cdot\xi}\,d\mu(\xi)
\end{equation*}
for $\xi\in\mathbb{R}^d$; here, for a multi-index $\beta=(\beta_1,\beta_2,\dots,\beta_d)\in\mathbb{N}^d$, $D^\beta=\partial_{\xi_1}^{\beta_1}\partial_{\xi_2}^{\beta_2}\cdots\partial_{\xi_d}^{\beta_d}$.
\end{lemma}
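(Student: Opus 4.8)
\noindent\emph{Proof proposal.} The statement is the classical ``finite moments imply smoothness of the Fourier transform'' fact, and the plan is to prove it by induction on $\abs{\beta}$, at each stage justifying differentiation under the integral sign via the dominated convergence theorem. Throughout, integration against $\mu\in\mathcal{M}_m$ should be read with respect to the (finite) total variation measure $\abs{\mu}$. The only ingredient that genuinely invokes the hypothesis is the elementary estimate $\abs{x^\beta}=\prod_{j}\abs{x_j}^{\beta_j}\le\abs{x}^{\abs{\beta}}\le 1+\abs{x}^m$, valid for every $x\in\mathbb{R}^d$ whenever $\abs{\beta}\le m$; since $\int_{\mathbb{R}^d}(1+\abs{x}^m)\,d\abs{\mu}=\abs{\mu}(\mathbb{R}^d)+\int_{\mathbb{R}^d}\abs{x}^m\,d\abs{\mu}<\infty$, it follows that $x\mapsto x^\beta e^{-ix\cdot\xi}$ lies in $L^1(\abs{\mu})$ for every such $\beta$ and every $\xi$, so the integrals appearing in the statement are well defined. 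For bookkeeping I would introduce, for each multi-index $\alpha$ with $\abs{\alpha}\le m$, the function $g_\alpha(\xi):=(-i)^{\abs{\alpha}}\int_{\mathbb{R}^d}x^\alpha e^{-ix\cdot\xi}\,d\mu(x)$, so that $\widehat\mu=g_0$ and the goal becomes showing $D^\beta\widehat\mu=g_\beta$ for $\abs{\beta}\le m$.

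First I would treat the base case: each $g_\alpha$ (in particular $\widehat\mu=g_0$) is continuous, because if $\xi_n\to\xi$ then $x^\alpha e^{-ix\cdot\xi_n}\to x^\alpha e^{-ix\cdot\xi}$ pointwise with the $n$-independent majorant $\abs{x^\alpha}\le 1+\abs{x}^m\in L^1(\abs{\mu})$. Next, the single inductive mechanism: for $\abs{\alpha}\le m-1$ and any $j$, I would show $\partial_{\xi_j}g_\alpha=g_{\alpha+e_j}$, where $e_j$ denotes the multi-index with a $1$ in position $j$ and $0$ elsewhere. Fixing $\xi$ and forming, for $h\neq0$,
\[
\frac{g_\alpha(\xi+he_j)-g_\alpha(\xi)}{h}=(-i)^{\abs{\alpha}}\int_{\mathbb{R}^d}x^\alpha e^{-ix\cdot\xi}\,\frac{e^{-ix_jh}-1}{h}\,d\mu(x),
\]
the integrand converges pointwise, as $h\to0$, to $-i\,x^{\alpha+e_j}e^{-ix\cdot\xi}$, while the elementary bound $\abs{e^{-i\theta}-1}\le\abs{\theta}$ yields the $h$-independent majorant $\abs{x^\alpha}\abs{x_j}=\abs{x^{\alpha+e_j}}\le 1+\abs{x}^m\in L^1(\abs{\mu})$ (this is exactly where $\abs{\alpha+e_j}=\abs{\alpha}+1\le m$ is used). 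Dominated convergence then gives $\partial_{\xi_j}g_\alpha(\xi)=(-i)^{\abs{\alpha}+1}\int_{\mathbb{R}^d}x^{\alpha+e_j}e^{-ix\cdot\xi}\,d\mu(x)=g_{\alpha+e_j}(\xi)$.

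Finally I would assemble these: for any sequence $j_1,\dots,j_r$ with $r\le m$, an easy induction on $r$ using $\partial_{\xi_j}g_\alpha=g_{\alpha+e_j}$ shows that $\partial_{\xi_{j_1}}\cdots\partial_{\xi_{j_r}}\widehat\mu$ exists and equals $g_{e_{j_1}+\cdots+e_{j_r}}$; since each $g_\alpha$ is continuous and depends only on the multiset $\{j_1,\dots,j_r\}$, all partial derivatives of $\widehat\mu$ of order at most $m$ exist, are continuous, and are independent of the order of differentiation, i.e.\ $\widehat\mu\in C^m(\mathbb{R}^d)$ with $D^\beta\widehat\mu=g_\beta$ for $\abs{\beta}\le m$, which is the assertion. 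I do not expect any real obstacle here---it is the textbook ``differentiate under the integral'' argument---the only points requiring a moment's care being the choice of the $L^1(\abs{\mu})$-majorant (precisely where the moment hypothesis enters) and the passage from continuity of all order-$\le m$ partials to membership in $C^m$; this is why the paper is content to call the proof standard and omit it.
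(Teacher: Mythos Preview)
Your argument is correct and is exactly the standard differentiation-under-the-integral-sign proof the paper alludes to when it calls the result ``standard and omitted.'' There is nothing to compare: the paper gives no proof, and yours is the expected one.
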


\begin{lemma}\label{lem:SchwartzFunctions}
Let $m$ and $\{M_\beta\}$ be as in the statement of the theorem and fixed. Consider the multivariate polynomial
\begin{equation*}
P(\xi)=\sum_{|\gamma|\leq m} \frac{M_\gamma}{\gamma!}(-i\xi)^\gamma
\end{equation*}
where $\gamma!=(\gamma_1!)(\gamma_2!)\cdots(\gamma_d!)$ and, for $\delta\geq 0$, set
\begin{equation*}
\phi_\delta(\xi)=\left(P(\xi)+\abs{\xi}^{m+2}\sin\left(\frac{1}{\abs{\xi}^m}\right)e^{-\delta/|\xi|^2}\right)e^{-\abs{\xi}^{2m}}
\end{equation*}
for $\xi\in\mathbb{R}^d\setminus\{0\}$ and $\phi_\delta(0)=P(0)$. If $\delta>0$, then $\phi_\delta$ is the Fourier transform of a measure $\mu_{1/\delta}\in\mathcal{M}_m^M$.
\end{lemma}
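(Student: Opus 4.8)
The plan is to reduce the statement to the assertion that, for each $\delta>0$, the function $\phi_\delta$ belongs to the Schwartz class $\mathcal{S}(\mathbb{R}^d)$. Granting this, the Fourier inversion theorem on $\mathcal{S}(\mathbb{R}^d)$ furnishes a function $g_\delta\in\mathcal{S}(\mathbb{R}^d)$ with $\widehat{g_\delta}=\phi_\delta$, where $\widehat{g_\delta}(\xi)=\int_{\mathbb{R}^d}e^{-ix\cdot\xi}g_\delta(x)\,dx$; since Schwartz functions are integrable, $\mu_{1/\delta}:=g_\delta(x)\,dx$ is a finite complex Borel measure with $\widehat{\mu_{1/\delta}}=\phi_\delta$, and since $x\mapsto \abs{x}^m g_\delta(x)$ is integrable we have $\mu_{1/\delta}\in\mathcal{M}_m$. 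It then remains only to check the moment constraints defining $\mathcal{M}_m^M$.

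To see that $\phi_\delta\in\mathcal{S}(\mathbb{R}^d)$ when $\delta>0$, note first that away from the origin $\phi_\delta$ is visibly smooth, and that the factor $e^{-\abs{\xi}^{2m}}$ forces $\phi_\delta$ together with each of its partial derivatives to decay faster than any power of $\abs{\xi}$ as $\abs{\xi}\to\infty$: differentiating any of $P$, $\abs{\xi}^{m+2}$, $\sin(1/\abs{\xi}^m)$, $e^{-\delta/\abs{\xi}^2}$, or $e^{-\abs{\xi}^{2m}}$ produces at worst a polynomially growing prefactor multiplied by a bounded trigonometric factor, and this is absorbed by $e^{-\abs{\xi}^{2m}}$. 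The delicate point --- and the main obstacle --- is smoothness across the origin, which is exactly where the hypothesis $\delta>0$ is used. Here I would argue, by induction on the order of differentiation, that for $\xi\neq 0$ every partial derivative of $\abs{\xi}^{m+2}\sin(1/\abs{\xi}^m)e^{-\delta/\abs{\xi}^2}$ is a finite sum of terms of the form $c\,\xi^{\nu}\abs{\xi}^{k}\,t(1/\abs{\xi}^m)\,e^{-\delta/\abs{\xi}^2}$ with $c$ a constant, $\nu$ a multi-index, $k$ an integer, and $t\in\{\sin,\cos\}$; because $e^{-\delta/\abs{\xi}^2}$ and all of its $\xi$-derivatives decay faster than any power of $\abs{\xi}$ as $\xi\to 0$, each such term tends to $0$ as $\xi\to0$, whence a standard difference-quotient argument shows that this summand of $\phi_\delta$ extends to a $C^\infty$ function on $\mathbb{R}^d$ all of whose derivatives vanish at the origin. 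Since $P(\xi)e^{-\abs{\xi}^{2m}}$ is patently Schwartz, we conclude $\phi_\delta\in\mathcal{S}(\mathbb{R}^d)$.

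Finally, I would verify that $\int_{\mathbb{R}^d}x^\beta\,d\mu_{1/\delta}(x)=M_\beta$ for $\abs{\beta}\leq m$. Applying Lemma \ref{lem:MomentsAndSmoothness} to $\mu_{1/\delta}\in\mathcal{M}_m$ and evaluating at $\xi=0$ gives $\int_{\mathbb{R}^d}x^\beta\,d\mu_{1/\delta}(x)=i^{\abs{\beta}}(D^\beta\widehat{\mu_{1/\delta}})(0)=i^{\abs{\beta}}(D^\beta\phi_\delta)(0)$, so it suffices to show $(D^\beta\phi_\delta)(0)=(-i)^{\abs{\beta}}M_\beta$. Writing $\phi_\delta=(P+R_\delta)e^{-\abs{\xi}^{2m}}$ with $R_\delta(\xi)=\abs{\xi}^{m+2}\sin(1/\abs{\xi}^m)e^{-\delta/\abs{\xi}^2}$ (and $R_\delta(0)=0$), the Leibniz rule together with the facts that all derivatives of $R_\delta$ vanish at the origin (established in the previous paragraph) and that $e^{-\abs{\xi}^{2m}}-1$ vanishes to order $2m>m$ at the origin yields $(D^\beta\phi_\delta)(0)=(D^\beta P)(0)$ for $\abs{\beta}\leq m$. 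A direct computation using the monomial expansion $P(\xi)=\sum_{\abs{\gamma}\leq m}(M_\gamma/\gamma!)(-i)^{\abs{\gamma}}\xi^\gamma$ and $(D^\beta\xi^\gamma)(0)=\beta!\,\mathbf{1}_{\{\beta=\gamma\}}$ gives $(D^\beta P)(0)=(-i)^{\abs{\beta}}M_\beta$, so $\int_{\mathbb{R}^d}x^\beta\,d\mu_{1/\delta}(x)=i^{\abs{\beta}}(-i)^{\abs{\beta}}M_\beta=M_\beta$, and hence $\mu_{1/\delta}\in\mathcal{M}_m^M$, as required.
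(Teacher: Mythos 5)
Your proposal is correct and follows essentially the same route as the paper's proof: the same decomposition $\phi_\delta=(P+f_\delta)e^{-\abs{\xi}^{2m}}$, the same inductive difference-quotient argument (driven by the rapid vanishing of $e^{-\delta/\abs{\xi}^2}$) showing the oscillatory term is smooth with all derivatives vanishing at the origin, the same Schwartz-class/Fourier-inversion step to produce $\mu_{1/\delta}$, and the same Leibniz-rule computation of the moments. No substantive differences to note.
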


\begin{proof}
For $\delta>0$, define
\begin{equation*}
f_\delta(\xi)=\begin{cases}
\abs{\xi}^{m+2}\sin(1/\abs{\xi}^m)e^{-\delta/\abs{\xi}^2} & \xi\neq 0\\
0 &\xi=0
\end{cases}
\end{equation*}
so that $\phi_\delta(\xi)=(P(\xi)+f_{\delta}(\xi))e^{-\abs{\xi}^{2m}}$ for $\xi\in\mathbb{R}^d$. Observe that $f_\delta$ is smooth on $\mathbb{R}^d\setminus\{0\}$. Further, for each multi-index $\beta\in\mathbb{N}^d$, it is straightforward to see that there are multivariate polynomials $Q_1$ and $Q_2$ (in the variables $\xi_1,\xi_2,\dots,\xi_d,$ and $\abs{\xi}$) and an integer $k\geq 0$ for which
\begin{equation}\label{eq:Derivative}
(D^\beta f_\delta)(\xi)=\left(Q_1(\xi)\sin(1/\abs{\xi}^m)+Q_2(\xi)\cos(1/\abs{\xi}^m)\right)\frac{e^{-\delta/\abs{\xi}^2}}{\abs{\xi}^k}
\end{equation}
for $\xi\neq 0$. We now prove that, for every $\beta\in\mathbb{N}^d$, $(D^\beta f_\delta)$ is continuous at $0$ with $(D^\beta f_\delta)(0)=0$ by inducting along chains. The base case, i.e., that for $\beta=0$, is immediate. Assuming the induction hypothesis, observe that, for $1\leq j\leq d$, 
\begin{eqnarray*}
(D^{\beta+e_j}f_\delta)(0)&=&\lim_{h\to 0}\frac{(D^{\beta}f_{\delta})(he_j)-0}{h}\\
&=&\lim_{h\to 0}\left(Q_1(he_j)\sin(1/\abs{h}^m)+Q_2(he_j)\cos(1/\abs{h}^m)\right)\frac{e^{-\delta/h^2}}{h^{k+1}}=0
\end{eqnarray*}
on account of the fact that $e^{-\delta/h^2}=o(h^{k+1})$ as $h\to 0$. A similar computation making use of \eqref{eq:Derivative} guarantees continuity for $D^{\beta+e_j}f_\delta$ at $0$ and so our proof by induction is complete. Consequently, $f_\delta$ is smooth on $\mathbb{R}^d$ and, in view of \eqref{eq:Derivative}, for each multi-index $\beta$, there is a constant $C>0$ and an integer $l\geq 1$ for which
\begin{equation*}
\abs{(D^\beta f_\delta)(\xi)}\leq C(1+\abs{\xi}^l)
\end{equation*}
for all $\xi\in\mathbb{R}^d$. Since $\phi_\delta(\xi)=(P(\xi)+f_\delta(\xi))e^{-\abs{\xi}^{2m}}$ where $P(\xi)$ is a polynomial, it follows immediately from the above estimate that $\phi_\delta$ is a member of the Schwartz class, i.e., $\phi_\delta\in \mathcal{S}(\mathbb{R}^d)$. Using the fact that the (inverse) Fourier transform is an isomorphism of $\mathcal{S}(\mathbb{R}^d)$, we conclude that
\begin{equation*}
\widecheck{\phi_\delta}(x)=\frac{1}{(2\pi)^d}\int_{\mathbb{R}^d}\phi_{\delta}(\xi)e^{i\xi\cdot x}\,d\xi
\end{equation*}
defines a Borel measure $\mu_{1/\delta}$ via $d\mu_{1/\delta}=\widecheck{\phi_\delta}(x)\,dx$ having finite moments of all orders, c.f., \cite[Note 1.1.1]{StrombergBook}. In particular, $\mu_{1/\delta}\in\mathcal{M}_m$ and $\widehat{\mu_{1/\delta}}=\phi_\delta$.

It remains to verify that $\mu_{1/\delta}\in\mathcal{M}_m^M$.  To this end, let $\beta$ be a multi-index for which $\abs{\beta}\leq m$. In view of Leibniz' rule, we have
\begin{eqnarray}\label{eq:Leibniz}\nonumber
(D^\beta\phi_\delta)(\xi)&=&D^{\beta}\left(\left(P(\xi)+f_\delta(\xi)\right)e^{-\abs{\xi}^{2m}}\right)\\
&=&\sum_{\alpha\leq\beta} {\beta\choose\alpha} D^{\alpha}\left(P(\xi)+f_{\delta}(\xi)\right)D^{\beta-\alpha}\left(e^{-\abs{\xi}^{2m}}\right)
\end{eqnarray}
for $\xi\in\mathbb{R}^d$; here $\leq$ is the standard partial ordering on multi-indices, i.e., $\alpha\leq \beta$ provided $\alpha_j\leq\beta_j$ for $j=1,2,\dots,d$. Now, for each multi-index $\alpha\leq\beta$, it is easy to verify that
\begin{equation*}
D^{\beta-\alpha}\left(e^{-\abs{\xi}^{2m}}\right)\Big\vert_{\xi=0}=\begin{cases}
1 &\alpha=\beta\\
0 &\alpha<\beta
\end{cases}
\end{equation*}
in view of the fact that $\abs{\beta-\alpha}\leq m<2m$. With this observation and that fact that $(D^\alpha f_\delta)(0)=0$ for each $\alpha$, \eqref{eq:Leibniz} ensures that $\left(D^{\beta}\phi_{\delta}\right)(0)=\left(D^{\beta}P\right)(0)$ and so it follows that
\begin{equation*}
\left(D^{\beta}\phi_\delta\right)(0)=\sum_{\abs{\alpha}\leq m}\frac{(-i)^{\abs{\alpha}}M_\alpha}{\alpha!}D^\beta(\xi^\alpha)\Big\vert_{\xi=0}=(-i)^{\abs{\beta}}M_\beta.
\end{equation*}
Thus, by virtue of the previous lemma, we have
\begin{equation*}
\int_{\mathbb{R}^d}x^\beta\,d\mu_{1/\delta}(x)=(i)^\abs{\beta}(D^\beta\phi_\delta)(0)=M_\beta
\end{equation*}
for each $\beta\in\mathbb{N}^d$ for which $\abs{\beta}\leq m$ and therefore, $\mu_{1/\delta}\in\mathcal{M}_m^M$.
\end{proof}

\begin{lemma}\label{lem:Lipshitz}
For $\delta_1,\delta_2>0$, we have
\begin{equation*}
d_m(\mu_{1/\delta_1},\mu_{1/\delta_2})\leq \abs{\delta_2-\delta_2}.
\end{equation*}
\end{lemma}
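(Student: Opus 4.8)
The plan is to exploit the explicit formulas for the Fourier transforms supplied by Lemma~\ref{lem:SchwartzFunctions}. Since $\widehat{\mu_{1/\delta_1}}=\phi_{\delta_1}$ and $\widehat{\mu_{1/\delta_2}}=\phi_{\delta_2}$, and since the polynomial $P$ and the Gaussian factor $e^{-\abs{\xi}^{2m}}$ are common to both $\phi_{\delta_1}$ and $\phi_{\delta_2}$, these pieces cancel upon subtraction and we are left with
\begin{equation*}
\widehat{\mu_{1/\delta_1}}(\xi)-\widehat{\mu_{1/\delta_2}}(\xi)=\abs{\xi}^{m+2}\sin\left(\frac{1}{\abs{\xi}^m}\right)\left(e^{-\delta_1/\abs{\xi}^2}-e^{-\delta_2/\abs{\xi}^2}\right)e^{-\abs{\xi}^{2m}}
\end{equation*}
for $\xi\neq 0$. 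First I would divide both sides by $\abs{\xi}^m$ and estimate $\abs{\sin(1/\abs{\xi}^m)}\leq 1$ and $e^{-\abs{\xi}^{2m}}\leq 1$, obtaining
\begin{equation*}
\frac{\abs{\widehat{\mu_{1/\delta_1}}(\xi)-\widehat{\mu_{1/\delta_2}}(\xi)}}{\abs{\xi}^m}\leq \abs{\xi}^2\left|e^{-\delta_1/\abs{\xi}^2}-e^{-\delta_2/\abs{\xi}^2}\right|
\end{equation*}
for every $\xi\neq 0$.

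Next I would invoke the elementary fact that $x\mapsto e^{-x}$ is $1$-Lipschitz on $[0,\infty)$ --- equivalently, apply the mean value theorem to $s\mapsto e^{-s/\abs{\xi}^2}$ on the interval with endpoints $\delta_1$ and $\delta_2$ --- to conclude that $\abs{e^{-a}-e^{-b}}\leq \abs{a-b}$ for all $a,b\geq 0$. Taking $a=\delta_1/\abs{\xi}^2$ and $b=\delta_2/\abs{\xi}^2$, the right-hand side above is bounded by $\abs{\xi}^2\cdot \abs{\delta_1-\delta_2}/\abs{\xi}^2=\abs{\delta_1-\delta_2}$, which is independent of $\xi$. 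Taking the supremum over $\xi\neq 0$ in the definition of $d_m$ then yields $d_m(\mu_{1/\delta_1},\mu_{1/\delta_2})\leq \abs{\delta_1-\delta_2}$, as desired (the repeated $\delta_2$ in the displayed statement being a typo for $\delta_1-\delta_2$).

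There is essentially no serious obstacle here; the computation is short and direct once the formula for $\phi_\delta$ is in hand. The one point worth a moment's attention is that the oscillatory factor $\sin(1/\abs{\xi}^m)$ is genuinely wild near the origin, but it is harmless for this estimate because it is bounded by $1$ everywhere and all of the decay needed as $\xi\to 0$ is produced by the product $\abs{\xi}^2\,e^{-\delta_i/\abs{\xi}^2}$, while as $\abs{\xi}\to\infty$ the Gaussian $e^{-\abs{\xi}^{2m}}$ (which we merely discarded as $\leq 1$) more than suffices. In particular, the Lipschitz estimate holds with no restriction relating $\delta_1$ and $\delta_2$, which is exactly what will be needed to build a non-convergent Cauchy sequence in the proof of Theorem~\ref{thm:Metric_Not_Compete}.
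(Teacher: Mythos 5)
Your proof is correct and takes essentially the same route as the paper: cancel the common polynomial and Gaussian factors, bound $\abs{\sin(1/\abs{\xi}^m)}$ and $e^{-\abs{\xi}^{2m}}$ by $1$, and finish with an elementary estimate on the exponential --- the paper organizes that last step as $\sup_{\theta>0}\theta\left(1-e^{-\rho/\theta}\right)\leq\rho$, which is the same inequality $1-e^{-x}\leq x$ underlying your mean-value-theorem (Lipschitz) bound $\abs{e^{-a}-e^{-b}}\leq\abs{a-b}$. You are also right that the displayed $\abs{\delta_2-\delta_2}$ in the statement is a typo for $\abs{\delta_1-\delta_2}$.
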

\begin{proof}
Set $\rho=\abs{\delta_2-\delta_1}$. Observe that, for $\xi\neq 0$, we have
\begin{eqnarray*}
\frac{\abs{\phi_{\delta_1}(\xi)-\phi_{\delta_2}(\xi)}}{\abs{\xi}^m}&=&\abs{\xi}^2\abs{\sin(1/\abs{\xi}^2)}\abs{e^{-\delta_1/\abs{\xi}^2}-e^{\delta_2/\abs{\xi}^2}}e^{-\abs{\xi}^{2m}}\\
&\leq &\abs{\xi}^2\left(1-e^{-\rho/\abs{\xi}^2}\right).
\end{eqnarray*}
In view of the previous lemma, $\widehat{\mu_{1/\delta_k}}=\phi_{\delta_k}$ for $k=1,2$, and so it follows that
\begin{eqnarray*}
d_m(\mu_{1/\delta_1},\mu_{1/\delta_2})&=&\sup_{\xi\neq 0}\frac{\abs{\phi_{\delta_1}(\xi)-\phi_{\delta_2}(\xi)}}{\abs{\xi}^m}\\
&\leq &\sup_{\xi\neq 0}\abs{\xi}^2\left(1-e^{-\rho/\abs{\xi}^2}\right)\\
&=&\sup_{\theta>0} g(\theta)
\end{eqnarray*}
where $g(\theta)=\theta(1-e^{-\rho/\theta})$. By a straightforward exercise in single-variable calculus, it is easy to see that $g(\theta)\leq \rho$ for all $\theta>0$. Thus,
\begin{equation*}
d(\mu_{1/\delta_1},\mu_{1/\delta_2})\leq \sup_{\theta>0}g(\theta)\leq \rho=\abs{\delta_1-\delta_2},
\end{equation*}
as was asserted.
\end{proof}

\begin{lemma}\label{lem:NotSmooth}
For $\delta=0$, $\phi_\delta=\phi_0$ is not twice differentiable and hence, for any $m\geq 2$, $\phi_0\notin C^m(\mathbb{R}^d)$.
\end{lemma}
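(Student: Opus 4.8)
The plan is to pin down a single second-order partial derivative of $\phi_0$ that fails to exist at the origin. Since $\phi_0$ is plainly $C^\infty$ away from $0$, this is where all the trouble must be, and if some $\partial_{\xi_i}\partial_{\xi_j}\phi_0(0)$ fails to exist then $\phi_0$ cannot be twice differentiable under any reasonable reading of that phrase; in particular $\phi_0\notin C^2(\mathbb{R}^d)\supseteq C^m(\mathbb{R}^d)$ for every $m\geq 2$. I would begin by splitting off the obviously smooth part: writing
\begin{equation*}
\phi_0(\xi)=P(\xi)e^{-\abs{\xi}^{2m}}+g(\xi),\qquad g(\xi)=\abs{\xi}^{m+2}\sin\left(\frac{1}{\abs{\xi}^m}\right)e^{-\abs{\xi}^{2m}},\quad g(0)=0,
\end{equation*}
and observing that $P(\xi)e^{-\abs{\xi}^{2m}}$ is $C^\infty$ (a polynomial times $e^{-\abs{\xi}^{2m}}$, where $\abs{\xi}^{2m}=(\xi_1^2+\cdots+\xi_d^2)^m$ is a polynomial in the coordinates), the problem reduces to showing that $\partial_{\xi_1}^2 g(0)$ does not exist.

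First I would note that every first-order partial of $g$ exists: away from $0$ this is clear, and at $0$ one has $\abs{g(te_j)/t}=\abs{t}^{m+1}\abs{\sin(\abs{t}^{-m})}\,e^{-\abs{t}^{2m}}\leq\abs{t}^{m+1}\to 0$, so $\partial_{\xi_j}g(0)=0$ for each $j$. For $\xi\neq 0$ a direct computation (using $\partial_{\xi_1}(1/\abs{\xi}^m)=-m\xi_1/\abs{\xi}^{m+2}$ and $\partial_{\xi_1}e^{-\abs{\xi}^{2m}}=-2m\xi_1\abs{\xi}^{2m-2}e^{-\abs{\xi}^{2m}}$) gives
\begin{equation*}
\partial_{\xi_1}g(\xi)=\left((m+2)\abs{\xi}^m\xi_1\sin\left(\tfrac{1}{\abs{\xi}^m}\right)-m\xi_1\cos\left(\tfrac{1}{\abs{\xi}^m}\right)-2m\abs{\xi}^{3m}\xi_1\sin\left(\tfrac{1}{\abs{\xi}^m}\right)\right)e^{-\abs{\xi}^{2m}}.
\end{equation*}
The crucial observation is that of the three terms only the middle one, $-m\xi_1\cos(1/\abs{\xi}^m)$, is dangerous; the other two carry a positive power of $\abs{\xi}$.

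Finally I would form the difference quotient for $\partial_{\xi_1}^2 g(0)$ along the $\xi_1$-axis: for $t\neq 0$, using $\abs{te_1}=\abs{t}$ and $\partial_{\xi_1}g(0)=0$,
\begin{equation*}
\frac{\partial_{\xi_1}g(te_1)-\partial_{\xi_1}g(0)}{t}=\left((m+2)\abs{t}^m\sin\left(\tfrac{1}{\abs{t}^m}\right)-m\cos\left(\tfrac{1}{\abs{t}^m}\right)-2m\abs{t}^{3m}\sin\left(\tfrac{1}{\abs{t}^m}\right)\right)e^{-\abs{t}^{2m}}.
\end{equation*}
As $t\to 0$ the first and third summands vanish and $e^{-\abs{t}^{2m}}\to 1$, while $-m\cos(1/\abs{t}^m)$ has no limit: along $t_k=(2\pi k)^{-1/m}$ the quantity tends to $-m$, and along $s_k=(\pi(2k+1))^{-1/m}$ it tends to $+m$, and $m\geq 2>0$. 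Hence the limit defining $\partial_{\xi_1}^2 g(0)$, and therefore $\partial_{\xi_1}^2\phi_0(0)$, does not exist, which proves the lemma.

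This argument is essentially bookkeeping, so I do not anticipate a genuine obstacle; the one point requiring care is checking that the ``harmless'' ingredients — the polynomial $P$, the Gaussian $e^{-\abs{\xi}^{2m}}$, and the extra terms the product and chain rules produce from $\abs{\xi}^{2m}$ — really contribute only $C^\infty$ quantities or quantities carrying a positive power of $\abs{\xi}$. This is exactly where $m\geq 2$ enters: it forces $2m-2>0$, so the first derivative of the Gaussian vanishes to positive order at $0$. It is also worth confirming at the outset that exhibiting one nonexistent second partial genuinely settles the lemma, i.e., that any sensible notion of being ``twice differentiable at $0$'' requires every $\partial_{\xi_i}\partial_{\xi_j}\phi_0(0)$ to exist.
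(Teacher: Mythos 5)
Your proposal is correct and follows essentially the same route as the paper: both restrict to the $\xi_1$-axis, isolate the oscillatory piece $\abs{\xi}^{m+2}\sin(1/\abs{\xi}^m)$, and show the second difference quotient at $0$ fails to converge because of the $-m\cos(1/\abs{t}^m)$ term. The only cosmetic difference is that the paper strips off the factor $e^{-\abs{\xi}^{2m}}$ (by multiplying by its reciprocal) before differentiating, whereas you carry it along and check its contribution is harmless.
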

\begin{proof}
Consider $f:\mathbb{R}\to\mathbb{R}$ defined by
\begin{equation*}
f(x)=e^{|xe_1|^{2m}}\phi_0(xe_1)-P(xe_1)=\abs{x}^{m+2}\sin(1/|x|^{m})
\end{equation*}
for $x\in\mathbb{R}$. It is straightforward to verify that $f$ is continuously differentiable on $\mathbb{R}$ with 
\begin{equation*}
f'(x)=\begin{cases}x\Big(
(m+2)|x|^{m}\sin(1/\abs{x}^m)-m\cos(1/\abs{x}^m)\Big)& x\neq 0\\
0 & x=0.
\end{cases}
\end{equation*}
With this, we see that the limit
\begin{equation*}
\lim_{h\to 0}\frac{f'(h)-f'(0)}{h}=\lim_{h\to 0}\left((m+2)\abs{h}^m\sin(1/\abs{h}^m)-m\cos(1/\abs{h}^m)\right)
\end{equation*}
does not exist and hence $f$ is not twice differentiable at $0$. Of course, if $\phi_0\in C^m(\mathbb{R}^d)\subseteq C^2(\mathbb{R}^d)$, the construction of $f$ would imply that $f\in C^2(\mathbb{R})$ which is not the case. Hence $\phi_0\notin C^m(\mathbb{R}^d)$.
\end{proof}
\begin{proof}[Proof Theorem \ref{thm:Metric_Not_Compete}]
We first verify that $d_m$ is finite on $\mathcal{M}_m^M$. To this end, let $\nu_1,\nu_2\in\mathcal{M}_m^M$. In view of Lemma \ref{lem:MomentsAndSmoothness}, an application of Taylor's theorem (with remainder in Peano form) guarantees that, for $k=1,2$,
\begin{eqnarray*}
\widehat{\nu_k}(\xi)&=&\sum_{\abs{\beta}\leq m}\frac{D^\beta(\widehat{\nu_k})(0)}{\beta!}\xi^\beta+\mathcal{E}_k(\xi)\abs{\xi}^m\\
&=&\sum_{\abs{\beta}\leq m}\frac{(-i)^{\abs{\beta}}M_\beta}{\beta!}\xi^\beta +\mathcal{E}_k(\xi)\abs{\xi}^m
\end{eqnarray*}
where $\mathcal{E}_k$ is a continuous function with $\mathcal{E}_k(\xi)\to 0$ as $\xi\to 0$; here, we have used the fact that $\nu_1$ and $\nu_2$ have matching moments (given by $M=\{M_\beta\}$) up to order $m$). Thus
\begin{equation*}
\sup_{0<\abs{\xi}\leq 1}\frac{\abs{\widehat{\nu_1}(\xi)-\widehat{\nu_2}(\xi)}}{\abs{\xi}^m}=\sup_{0<\abs{\xi}\leq 1}\abs{\mathcal{E}_1(\xi)-\mathcal{E}_2(\xi)}=:C<\infty.
\end{equation*}
Consequently,
\begin{eqnarray*}
\lefteqn{d(\nu_1,\nu_2)=\sup_{\xi\neq 0}\frac{\abs{\widehat{\nu_1}(\xi)-\widehat{\nu_2}(\xi)}}{\abs{\xi}^m}}\\
&&\leq C+\sup_{\abs{\xi}> 1}\abs{\widehat{\nu_1}(\xi)-\widehat{\nu_2}(\xi)}\leq C+\abs{\nu_1}(\mathbb{R}^d)+\abs{\nu_2}(\mathbb{R}^d)<\infty
\end{eqnarray*}
where we have used that, for $k=1,2$, the total variation $\abs{\nu_k}$ of $\nu_k$ is a finite measure. Thus $d_m$ is finite for each pair $\nu_1,\nu_2\in\mathcal{M}_m^M$ and, since $\mathcal{M}_m^M$ is non-empty by  Lemma \ref{lem:SchwartzFunctions}, we may conclude that $(\mathcal{M}_m^M,d_m)$ is a metric space. 

It remains to show that $(\mathcal{M}_m^M,d_m)$ is not complete. To this end, for each positive integer $j$, define $\mu_j\in\mathcal{M}_m^M$ to be the measure with Fourier transform $\phi_{1/j}$ guaranteed by Lemma \ref{lem:SchwartzFunctions}. By virtue of the Lemma \ref{lem:Lipshitz}, 
\begin{equation*}
d(\mu_j,\mu_k)\leq \abs{\frac{1}{j}-\frac{1}{k}}
\end{equation*}
and thus $\{\mu_j\}\subseteq\mathcal{M}_m^M$ is a Cauchy sequence in the metric $d_m$. For suppose that, for some measure $\mu\in\mathcal{M}_m^M$, $\lim_{j\to\infty}d_m(\mu_j,\mu)=0$. By the definition of $d_m$, it follows that $\widehat{\mu}(\xi)=\lim_{j\to\infty}\phi_{1/j}(\xi)=\phi_0(\xi)$ for each $\xi\in\mathbb{R}^d$. In view of Lemma \ref{lem:MomentsAndSmoothness}, we conclude that $\phi_0\in C^m(\mathbb{R}^d)$ but this stands in contradiction to Lemma \ref{lem:NotSmooth}. Thus, $(\mathcal{M}_m^M,d_m)$ is not complete.
\end{proof}
The example in the above proof can be used to illustrate that the conclusion of Lemma 3.2 of \cite{Sta20} fails in the context of complex measures. Through a careful study of Section 2.9 of Chapter VI of \cite{K04}, it becomes clear that the validity of Lemma 3.2 of \cite{Sta20} is tied to the remarkable nature of positive definition functions (arising as the Fourier transforms of positive measures via Bochner's theorem). 

\end{document}